\newtheorem*{thm}{Theorem}
\newtheorem{lemma}{Lemma}
\newtheorem{corollary}{Corollary}
\DeclareMathOperator{\tr}{tr}
\begin{document}

\title[]{Randomly Pivoted Partial Cholesky: random how?}

\author[]{Stefan Steinerberger}
\address{Department of Mathematics, University of Washington, Seattle, WA 98195, USA}
\email{steinerb@uw.edu}

\begin{abstract}  We consider the problem of finding good low rank approximations of symmetric, positive-definite $A \in \mathbb{R}^{n \times n}$.  Chen-Epperly-Tropp-Webber showed, among many other things, that the randomly pivoted partial Cholesky algorithm that chooses the $i-$th row with probability proportional to the diagonal entry $A_{ii}$ leads to a universal contraction of the trace norm (the Schatten 1-norm) in expectation for each step. We show that if one chooses the $i-$th row with likelihood proportional to $A_{ii}^2$ one obtains the same result in the Frobenius norm (the Schatten 2-norm). Implications for the greedy pivoting rule and pivot selection strategies are discussed.

\end{abstract}

\thanks{The author was partially supported by the NSF (DMS-212322).}

\maketitle

\section{Introduction}
\subsection{Randomly pivoted Cholesky.} We consider the problem of finding a rank $k$ approximation of a symmetric, positive-definite matrix $A \in \mathbb{R}^{n \times n}$ in the setting where $n$ is large
and evaluating entries of the matrix is expensive. A useful method in this setting is
the randomly pivoted partial Cholesky decomposition which provides a rank-$k$ approximation using only $(k+1)n$ entry evaluations and $\mathcal{O}(k^2 n)$ additional arithmetic operations.
 Our presentation is inspired by the extensive theoretical and numerical analysis undertaken by Chen-Epperly-Tropp-Webber \cite{chen}.
The essence of the method can be described very concisely: given a symmetric, positive-definite matrix $M$ with rows $M_1, \dots, M_n \in \mathbb{R}^n$, we set $\widehat{M}^{(0)} = \mathbf{0} \in \mathbb{R}^{n \times n}$, initialize $M^{(0)} = M$ and iteratively update both matrices with a rank-one correction induced by a row $M_{s_k}$ where $s_k \in \left\{1,2,\dots, n\right\}$,
\begin{align*}
\widehat{M}^{(k+1)} &= \widehat{M}^{(k)} + \frac{ (M^{(k)}_{s_k})^{T}  (M^{(k)}_{s_k})^{}}{M^{(k)}_{s_k s_k}} \\
M^{(k+1)} &= M^{(k)} - \frac{ (M^{(k)}_{s_k})^{T}  (M^{(k)}_{s_k})^{}}{M^{(k)}_{s_k s_k}}.
\end{align*}
By construction, for all $k \geq 0$,
$$ \widehat{M}^{(k)} +M^{(k)} = M$$
and $\widehat{M}^{(k)}$ is a rank $k$ approximation of $A$ with $M^{(k)}$ being the residual. The algorithm is easy to implement and easy to run. The remaining question is how to choose the rows $s_k \in \left\{1,2,\dots, n\right\}$. Ideally, we would like to choose the rows in such a way that they capture a lot of the $\ell^2-$energy of the matrix at each step: however, we are also trying to avoid evaluating entries of the matrix beyond what is absolutely necessary and will usually only have access to the diagonal entries of $M^{(k)}$. At this point, different philosophies start to emerge.
 \begin{enumerate}
\item \textit{Random Pivoting.} Pick $s_k \in \left\{1,2,\dots, n\right\}$ at random, e.g. \cite{williams}.
\item \textit{Greedy}. Choose $s_k$ so as to select the largest diagonal entry
$$ s_k = \arg\max_{1 \leq i \leq n} M^{(k)}(i,i).$$
If $X \in \mathbb{R}^{n \times n}$ is spd, then the inequality
$ |X_{ij}| \leq \sqrt{X_{ii} X_{jj}},$
shows that small diagonal elements imply that the entire column is small. The converse is not necessarily true but in the absence of other information, why not.
\item \textit{Adaptive Random Pivoting.} Chen-Epperly-Tropp-Webber \cite{chen} propose to select the  $s_k \in \left\{1,2,\dots, n\right\}$ with a probability proportional to the size of the diagonal entry
\vspace{-3pt}
$$ \mathbb{P}\left(s_k = i\right) = \frac{ M^{(k)}(i,i)}{ \tr M^{(k)}}.$$
This prefers removing columns with large diagonals but is more flexible.
\item \textit{Gibbs sampling.} As also pointed out by Chen-Epperly-Tropp-Webber \cite{chen}, one may want to introduce a parameter $\beta \geq 0$ and set
$$ \mathbb{P}\left(s_k = i\right) \qquad \mbox{to be proportional to} \qquad  M^{(k)}(i,i)^{\beta}.$$
This contains random pivoting ($\beta = 0$), adaptive random pivoting ($\beta = 1$) and greedy pivoting ($\beta \rightarrow \infty$). We will consider $\beta=2$.
\end{enumerate}

\subsection{The case $\beta = 1$}

A big advantage of adaptive random pivoting, $\beta = 1$, is the beautiful arising mathematical structure. If we are given an spd matrix $A \in \mathbb{R}^{n \times n}$ and produce $B\in \mathbb{R}^{n \times n}$ using adaptive random pivoting, one obtains in expectation
$$  \mathbb{E}B = A - \frac{A^2}{\tr A}.$$
This leads to a rapid decay of large eigenvalues of $A$ which is an excellent property when trying to obtain good low-rank approximations. The map $\Phi(A) = \mathbb{E}A$ has a number of other desirable and useful properties \cite[Lemma 5.3]{chen}. Taking the trace of the expectation, we have 
$$\mathbb{E} \tr( B) = \left( 1 - \frac{\tr(A^2)}{\tr(A)^2}\right) \tr(A)$$
which, using the Cauchy-Schwarz inequality,
$$ \tr(A^2) = \sum_{i=1}^{n} \lambda_i(A)^2 \geq \frac{1}{n} \left(\sum_{i=1}^{n} \lambda_i(A)\right)^2 = \frac{1}{n} \tr(A)^2$$
then implies
$$\mathbb{E} \tr( B) \leq \left( 1 - \frac{1}{n}\right) \tr(A).$$
The situation is even better than that: the Cauchy-Schwarz inequality is only sharp when the eigenvalues are roughly comparable (and in that case, efficient low rank approximation starts being impossible and it does not matter very much which row one picks). If some eigenvalues are bigger than others, the argument above shows the presence of a nonlinear feedback loop that leads to much better results. 
 Chen-Epperly-Tropp-Webber \cite{chen} use this in conjunction with a sophisticated argument across many iterations to prove that adaptive random pivoting can be used to obtain high quality low-rank approximations within a short number of iterations when the error is measured in the trace norm.

\section{Results}
\subsection{Frobenius norm.}
We saw above that if $B \in \mathbb{R}^{n \times n}$ arises from $A \in \mathbb{R}^{n \times n}$ by removing the $i-$th row/column with likelihood proportional to $A_{ii}$, then 
$$\mathbb{E} \tr(B) = \left( 1 - \frac{\tr(A^2)}{\tr(A)^2}\right) \tr(A)\leq \left( 1 - \frac{1}{n}\right) \tr(A).$$
This natural inequality implies rapid decay of the trace norm (Schatten 1-norm) under random adaptive pivoting. We now prove an analogous result for the Frobenius norm (Schatten 2-norm) when sampling with likelihood proportional to $A_{ii}^2$.

\begin{thm} Suppose $A \in \mathbb{R}^{n \times n}$ is spd and $B$ arises from $A$ by selecting the $i-$th row/column as pivot with likelihood proportional to $A_{ii}^2$. Then
$$ \mathbb{E} \| B\|_F^2 \leq \|A\|_F^2 - \frac{1}{\sum_{i=1}^{n} A_{ii}^2} \sum_{i=1}^{n} \|A_{i}\|_{\ell^2}^{4} \leq \left( 1 - \frac{1}{n} \right) \|A\|_{F}^2$$
\end{thm}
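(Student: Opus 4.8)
The plan is to compute $\mathbb{E}\|B\|_F^2$ in closed form and then dispatch both inequalities with Cauchy--Schwarz. Write $A_i$ for the $i$-th row of $A$; since $A$ is symmetric, $A_i^T = Ae_i$ is also the $i$-th column and $A_{ii} = e_i^T A e_i$. If the pivot is $s = i$, then $B = A - A_i^T A_i/A_{ii}$ is symmetric, so $\|B\|_F^2 = \tr(B^2)$. First I would expand this square; using $\tr\!\big(A\,A_i^T A_i\big) = (Ae_i)^T A (Ae_i) = (A^3)_{ii}$, $\tr\!\big((A_i^T A_i)^2\big) = \|A_i\|_{\ell^2}^4$, and $\|A_i\|_{\ell^2}^2 = (A^2)_{ii}$, this gives
$$\|B\|_F^2 = \|A\|_F^2 - \frac{2(A^3)_{ii}}{A_{ii}} + \frac{\|A_i\|_{\ell^2}^4}{A_{ii}^2}.$$
This identity is the whole reason for taking $\beta = 2$: weighting by $\mathbb{P}(s=i) = A_{ii}^2/S$ with $S := \sum_j A_{jj}^2$ cancels the denominators exactly (indices with $A_{ii} = 0$ have $A_i = 0$ and contribute nothing anywhere, so may be discarded). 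Hence
$$\mathbb{E}\|B\|_F^2 = \|A\|_F^2 - \frac{2}{S}\sum_{i=1}^{n} A_{ii}(A^3)_{ii} + \frac{1}{S}\sum_{i=1}^{n}\|A_i\|_{\ell^2}^4.$$

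With this in hand the first inequality in the theorem is equivalent to
$$\sum_{i=1}^{n} A_{ii}(A^3)_{ii} \;\geq\; \sum_{i=1}^{n} (A^2)_{ii}^2 \;=\; \sum_{i=1}^{n}\|A_i\|_{\ell^2}^4,$$
and this I would prove term by term. Because $A$ is spd, $\langle x,y\rangle := x^T A y$ is a (semi-)inner product; applying its Cauchy--Schwarz inequality to $x = e_i$ and $y = Ae_i$ gives $(e_i^T A^2 e_i)^2 = \langle e_i, Ae_i\rangle^2 \leq \langle e_i,e_i\rangle\,\langle Ae_i, Ae_i\rangle = (e_i^T A e_i)(e_i^T A^3 e_i)$, and summing over $i$ finishes it. Equivalently, writing $A = \sum_k \lambda_k u_k u_k^T$, each diagonal entry $(A^m)_{ii}$ is the $m$-th moment of $\lambda$ under the probability vector $(u_k(i)^2)_k$, and the per-$i$ bound becomes $(\mathbb{E}\lambda^2)^2 \le (\mathbb{E}\lambda)(\mathbb{E}\lambda^3)$, again Cauchy--Schwarz. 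Either way, plugging back in yields $\mathbb{E}\|B\|_F^2 \le \|A\|_F^2 - S^{-1}\sum_i \|A_i\|_{\ell^2}^4$.

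For the second inequality I would show $\frac{1}{S}\sum_i \|A_i\|_{\ell^2}^4 \geq \frac{1}{n}\|A\|_F^2$, i.e. $n\sum_i \|A_i\|_{\ell^2}^4 \geq S\,\|A\|_F^2$. Since $S = \sum_i A_{ii}^2 \leq \sum_{i,j} A_{ij}^2 = \|A\|_F^2$ (keeping only diagonal terms), it suffices to prove $n\sum_i \|A_i\|_{\ell^2}^4 \geq \big(\sum_i \|A_i\|_{\ell^2}^2\big)^2 = \|A\|_F^4$, which is the power-mean (Cauchy--Schwarz) inequality for the scalars $\|A_i\|_{\ell^2}^2$. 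I do not anticipate a real obstacle here: the $\tr(B^2)$ computation is routine and both inequalities are Cauchy--Schwarz. The one point that genuinely uses positive-definiteness — and which plays the role that $\mathbb{E}B = A - A^2/\tr A$ plays for $\beta = 1$ — is the closed form for $\mathbb{E}\|B\|_F^2$ together with the moment bound $\sum_i A_{ii}(A^3)_{ii} \geq \sum_i (A^2)_{ii}^2$; already in the $2\times 2$ case the gap in that bound equals $2A_{12}^2\det A$, which is exactly why spd (not merely symmetric) is needed.
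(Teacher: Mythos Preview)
Your proof is correct and follows essentially the same three-step structure as the paper: compute the closed form for $\|B\|_F^2$ (the paper does this via spherical integration in Lemma~1, you via a direct $\tr(B^2)$ expansion), establish the pointwise inequality $A_{ii}(A^3)_{ii}\ge (A^2)_{ii}^2$ (the paper via spectral decomposition in Lemma~2, you via Cauchy--Schwarz in the $A$-semi-inner product), and then finish both inequalities with Cauchy--Schwarz exactly as the paper does. Your execution is somewhat more streamlined, but the route is the same.
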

The first inequality is sharp when $A$ is a diagonal matrix. In contrast to the statement for the trace which is an identity, the nonlinearity complicates things and both inequalities are typically strict. Moreover, as above, we observe that the inequalities get strictly stronger in the presence of fluctuations in $\|A_i\|_{\ell^2}$. The proof implies a stronger statement, however, since that is slightly more difficult to parse we postpone its discussion to the last section.

\subsection{Greedy Pivoting}
As a byproduct, the proof of the Theorem has implications for understanding greedy pivoting ($\beta \rightarrow \infty$) and, in particular, when it may be useful. The story can be concisely summarized in the following two bounds.
 
 \begin{corollary}
 Suppose $A \in \mathbb{R}^{n \times n}$ is symmetric and positive semi-definite. If $B$ arises from removing the $i-$th row/column, then
 $$ \left\|  B  \right\|_F^2 \leq \|A\|_F^2  - \frac{\|A_i\|_{\ell^2}^4}{A_{ii}^2}.$$
 In particular, since $\|A_i\|_{\ell^2}^2 \geq A_{ii}^2$, this implies
  $$ \left\|  B  \right\|_F^2 \leq \|A\|_F^2  - A_{ii}^2.$$
 \end{corollary}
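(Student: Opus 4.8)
The plan is to prove the first (deterministic, single-pivot) inequality directly and then read off the second in one line; I also note in passing that averaging the first inequality over the index $i$ with weights proportional to $A_{ii}^2$ reproduces the middle expression in the Theorem, so this Corollary is really the deterministic engine behind it. Write $v = A e_i \in \mathbb{R}^n$ for the $i$-th column of $A$; by symmetry this is also the transpose of the $i$-th row $A_i$, so $v_i = A_{ii}$, $\|v\|_{\ell^2}^2 = \|A_i\|_{\ell^2}^2$, and the update reads $B = A - A_{ii}^{-1} v v^{T}$. I assume $A_{ii} > 0$; if $A_{ii} = 0$, then positive semi-definiteness forces the whole $i$-th row and column of $A$ to vanish, $B = A$, and the bound holds with the natural convention $0/0 := 0$.

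First I would expand the Frobenius norm via $\|A - C\|_F^2 = \|A\|_F^2 - 2\langle A, C\rangle_F + \|C\|_F^2$ with $C = A_{ii}^{-1} v v^{T}$. Using $\langle A, v v^{T}\rangle_F = v^{T} A v$ and $\|v v^{T}\|_F^2 = \|v\|_{\ell^2}^4$ this gives
\[
  \|B\|_F^2 \;=\; \|A\|_F^2 - \frac{2\, v^{T} A v}{A_{ii}} + \frac{\|v\|_{\ell^2}^4}{A_{ii}^2}.
\]
Comparing with the claimed bound, it therefore suffices to prove $\|v\|_{\ell^2}^4 \le A_{ii}\, v^{T} A v$. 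I would recognize this as Cauchy--Schwarz for the positive semi-definite symmetric bilinear form $\langle x, y\rangle_A := x^{T} A y$: taking $x = e_i$ and $y = v = A e_i$ gives $\langle e_i, v\rangle_A = e_i^{T} A^2 e_i = \|v\|_{\ell^2}^2$, $\langle e_i, e_i\rangle_A = A_{ii}$, and $\langle v, v\rangle_A = v^{T} A v$, so that $\langle e_i, v\rangle_A^2 \le \langle e_i, e_i\rangle_A \, \langle v, v\rangle_A$ is exactly the desired inequality. (Equivalently, insert the spectral decomposition $A = \sum_k \lambda_k q_k q_k^{T}$ with $\lambda_k \ge 0$ and reduce to $\big(\sum_k \lambda_k^2 w_k\big)^2 \le \big(\sum_k \lambda_k w_k\big)\big(\sum_k \lambda_k^3 w_k\big)$ with nonnegative weights $w_k = (q_k)_i^2$.) Substituting back yields $\|B\|_F^2 \le \|A\|_F^2 - \|v\|_{\ell^2}^4 / A_{ii}^2 = \|A\|_F^2 - \|A_i\|_{\ell^2}^4 / A_{ii}^2$, with equality exactly when this Cauchy--Schwarz step is tight, e.g.\ when $A$ is diagonal.

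The second inequality then follows immediately: $\|A_i\|_{\ell^2}^2 = \sum_j A_{ij}^2 \ge A_{ii}^2$, since $A_{ii}^2$ is one of the nonnegative summands, whence $\|A_i\|_{\ell^2}^4 / A_{ii}^2 \ge A_{ii}^4 / A_{ii}^2 = A_{ii}^2$. I do not anticipate a genuine obstacle here; the only points requiring care are the bookkeeping in the Frobenius expansion --- in particular identifying the cross term $\langle A, v v^{T}\rangle_F$ with $v^{T} A v = (A^3)_{ii}$ --- spotting the Cauchy--Schwarz mechanism (which the paper's treatment of the $\beta = 1$ case strongly suggests is the intended one), and disposing of the degenerate case $A_{ii} = 0$.
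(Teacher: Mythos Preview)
Your proof is correct and follows essentially the same route as the paper: the Frobenius expansion you write is exactly the paper's Lemma~1 specialized to a single deterministic pivot (the paper reaches it via spherical integration, you via the trace identities $\langle A, vv^{T}\rangle_F = v^{T}Av = (A^3)_{ii}$), and the key inequality $\|A_i\|_{\ell^2}^4 \le A_{ii}\,(A^3)_{ii}$ is the paper's Lemma~2. Your packaging of that inequality as Cauchy--Schwarz for the semi-inner product $\langle x,y\rangle_A = x^{T}Ay$ with $x=e_i$, $y=Ae_i$ is a clean coordinate-free variant of the paper's spectral-decomposition proof (which you also sketch parenthetically); the two are equivalent, and your handling of the degenerate case $A_{ii}=0$ is fine.
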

Both inequalities are sharp for diagonal matrices. The second inequality suggests that one should maximize $A_{ii}^2$ since that leads to guaranteed decay.
 However, this is only a good idea if $A_{ii}$ being large is indicative of $\|A_i\|_{\ell^2}^4/A_{ii}^2$ being large or, phrased differently, if $A_{ii}$ being large implies that the entire row/column is large. This leads to an (impractical) deterministic method of purely theoretical interest.
 
 \begin{corollary}
  Suppose $A \in \mathbb{R}^{n \times n}$ is symmetric and positive semi-definite. If $B$ arises from removing the $i-$th row/column, where
  $$ i = \arg\max_{1 \leq j \leq n \atop A_{jj} \neq 0}  \frac{\|A_j\|_{\ell^2}^2}{A_{jj}}, \quad \mbox{then} \qquad \left\|  B  \right\|_F^2 \leq \left(1- \frac{1}{n} \right) \|A\|_F^2.$$
 \end{corollary}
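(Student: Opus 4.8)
The plan is to reduce everything to Corollary 1 and then run a short two-step weighted-averaging argument on scalars. For every index $j$ with $A_{jj}\neq 0$ write $r_j = \|A_j\|_{\ell^2}^2/A_{jj}$, so that the pivot $i$ is by definition the index maximizing $r_j$. Corollary 1, applied with this particular $i$, gives immediately
$$ \|B\|_F^2 \leq \|A\|_F^2 - \frac{\|A_i\|_{\ell^2}^4}{A_{ii}^2} = \|A\|_F^2 - r_i^2, $$
so it suffices to establish the single scalar inequality $r_i^2 \geq \|A\|_F^2/n$ (the case $A=0$ being trivial, so we may assume some $A_{jj}\neq 0$).

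First I would record the elementary observation that if $A$ is psd and $A_{jj}=0$, then the whole $j$-th row vanishes, since $|A_{jk}|\leq \sqrt{A_{jj}A_{kk}}=0$. Hence $\|A\|_F^2 = \sum_{j:\,A_{jj}\neq 0}\|A_j\|_{\ell^2}^2 = \sum_{j:\,A_{jj}\neq 0} r_j A_{jj}$, and since $r_j \leq r_i$ for all such $j$ this yields
$$ \|A\|_F^2 \leq r_i \sum_{j:\,A_{jj}\neq 0} A_{jj} = r_i\,\tr(A). $$
The second ingredient is a lower bound for $r_i$ in terms of $\tr(A)$: because the $j$-th coordinate of the row $A_j$ equals $A_{jj}$, we have $\|A_j\|_{\ell^2}^2 \geq A_{jj}^2$, hence $r_j \geq A_{jj}$, and maximizing over $j$ gives $r_i \geq \max_j A_{jj} \geq \tr(A)/n$, the last step using that $\tr(A)$ is a sum of $n$ nonnegative terms.

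Multiplying the two bounds, $r_i^2 \geq \big(\tr(A)/n\big)\cdot\big(\|A\|_F^2/\tr(A)\big) = \|A\|_F^2/n$, and substituting into the consequence of Corollary 1 finishes the proof. I do not anticipate a genuine obstacle here; the only point requiring care is the bookkeeping with vanishing diagonal entries — making sure the index sets over which one sums are consistent and that the $n$ appearing in the final bound is the full ambient dimension rather than the number of nonzero diagonal entries (the latter would give a formally stronger but differently stated inequality). As a side remark, one could also note that both inequalities used above are strict unless all $\|A_j\|_{\ell^2}$ coincide and all nonzero $A_{jj}$ coincide, consistent with the "fluctuations improve the bound" phenomenon observed for the Theorem.
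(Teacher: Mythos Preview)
Your proof is correct. The overall architecture matches the paper's exactly: invoke Corollary~1 to reduce to the scalar inequality $r_i^2 \geq \|A\|_F^2/n$, and handle vanishing diagonal entries via $|A_{jk}|\leq \sqrt{A_{jj}A_{kk}}$. The only difference is in how that scalar inequality is established. The paper uses a single Cauchy--Schwarz step,
\[
\|A\|_F^4 = \Bigl(\sum_{A_{jj}\neq 0} r_j A_{jj}\Bigr)^2 \leq \Bigl(\sum_{A_{jj}\neq 0} r_j^2\Bigr)\Bigl(\sum_j A_{jj}^2\Bigr) \leq n\, r_i^2 \cdot \|A\|_F^2,
\]
whereas you route through $\tr(A)$ via two $\ell^\infty$--$\ell^1$ bounds, obtaining $r_i \geq \|A\|_F^2/\tr(A)$ and $r_i \geq \tr(A)/n$ separately and then multiplying. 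Your version is arguably more elementary (no Cauchy--Schwarz needed) and makes transparent that the trace is the natural intermediate quantity; the paper's version is a one-liner once Cauchy--Schwarz is on the table. Neither buys anything substantive over the other.
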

 
 We emphasize again that this procedure is not practical: the whole purpose of fast algorithms of this type is that they require relatively few read outs of matrix entries: computing the $\ell^2-$norm of all the columns is too costly. However, we believe it to be a valuable theoretical insight suggesting that, depending on the matrix structure, greedy pivoting may be safe in practice.

\subsection{Some examples.}  It is a natural question whether the results presented in the previous section have any practical relevance and whether they suggest any type of pivoting rule. As a starting point, we note that the contraction property in the Frobenius norm indicates that selecting pivots with likelihood proportional to $A_{ii}^2$ is likely to produce reasonable results in the Frobenius norm and therefore also in the trace norm (which is controlled by the Frobenius norm). Nonetheless, it is not a priori clear how this will manifest in concrete examples.\\

\textit{1. Diagonal Matrices.} We discuss three relevant examples.  The first example is very simple: consider the case when $A \in \mathbb{R}^{n \times n}$ is a diagonal matrix. The expected decay (of the trace norm) in each step is given by
$$ \frac{\sum_{i=1}^{n} A_{ii}^{\beta} A_{ii}}{ \sum_{i=1}^{n} A_{ii}^{\beta}} \qquad \mbox{which is monotonically increasing in}~\beta.$$
Larger values of $\beta$ lead to better results, greedy pivoting is the best. There is an interesting question: the example of diagonal matrices shows that if one wants to sample with likelihood $A_{ii}^{\beta}$ and desires to have rapid decay of the Schatten $p-$norm 
$ \|A\|^p_{p} =  \sum_{i=1}^{n} \lambda_i(A)^p,$
then a necessary condition is $\beta \geq p$. Is it also sufficient?  It is for $\beta=1$ (by Chen-Epperly-Tropp-Webber \cite{chen}) and $\beta=2$ (by our Theorem).\\

\textit{2. Random Matrices.} We consider $100 \times 100$ random spd matrices of the form $A = Q^T D Q$, where $D$ is diagonal, $D_{ii} = f(i)$ and $Q$ is a random orthogonal matrix. We measure the size of $M^{(50)}$ relative to the size of $M^{(0)}=A$ in three norms $\|\cdot \|_X$: the operator norm, the Frobenius norm and the trace norm. Due to concentration effects, the results of the experiment do not seem to depend strongly on the choice of the random orthogonal matrix $Q$ and are thus reproducible. Greedy pivoting is uniformly better but the difference is not large. Carrying out more experiments shows that these types of matrices seem to behave a bit like diagonal matrices insofar as larger values of $\beta$ seem to be slightly better.

\begin{figure}[h!]
    \centering
    \begin{minipage}{.49\textwidth}
  \begin{tabular}{ | l | c | c | c |}
    \hline
     $f(i)$  & $\|\cdot\|_{\tiny \mbox{op}}$  &  $\|\cdot\|_{F}$   &  $\|\cdot\|_{1}$ \\ \hline
    $1 + i/100$ & 0.92 & 0.68 & 0.49   \\ \hline
    $i$ & 0.82  & 0.56  & 0.40\\ \hline
     $i^3$ & 0.46  & 0.27 & 0.18 \\    \hline
          $i^5$ & 0.20 & 0.11 & 0.07 \\    \hline
  \end{tabular}
    \captionsetup{width=.8\linewidth}
    \caption{Size of $M^{(50)}$ relative to $M^{(0)} = A$ when $\beta=1$.}
    \end{minipage}%
    \begin{minipage}{0.49\textwidth}

  \begin{tabular}{ | l | c | c | c |}
    \hline
     $f(i)$  & $\|\cdot\|_{\tiny \mbox{op}}$    &  $\|\cdot\|_{F}$&  $\|\cdot\|_{1}$ \\ \hline
    $1 + i/100$ & 0.90 & 0.67 & 0.48    \\ \hline
    $i$ & 0.77 & 0.53 & 0.37 \\ \hline
     $i^3$ & 0.35  & 0.22 & 0.15 \\    \hline
          $i^5$ & 0.13  & 0.07 & 0.04 \\    \hline
  \end{tabular}
      \captionsetup{width=.8\linewidth}
    \caption{Size of $M^{(50)}$ relative to $M^{(0)} = A$ when $\beta=\infty$.}
    \end{minipage}
\end{figure}

 \textit{3. Spiral Kernel.} So far, we have seen two examples where larger values of $\beta$ lead to better results.  To illustrate a setting where this is not the case, we use a type of example that was originally used by Chen-Epperly-Tropp-Webber \cite{chen} to illustrate the failure of greedy pivoting; the example is inspired by the type of kernel matrices that may arise in machine learning. Consider the curve $\gamma(t) = (e^t \cos{t}, e^t \sin{t})$ 
 and sample 500 points unevenly from the interval $[0,64]$ to create two clusters (see Figure 4 below). The matrix is  
  $ A_{ij} = \exp\left( - \|x_i - x_j\|^2/1000 \right)$
 and our goal will be to obtain a low-rank approximation of $A$ (using, say, 50 steps of random Cholesky). 
This type of structure may cause problems: the entries on the diagonal are, initially, $A_{ii} = 1$ and are in no relation to the size of the associate columns.
 
\begin{figure}[h!]
    \centering
    \begin{minipage}{.49\textwidth}
    \centering
\includegraphics[width=0.7\textwidth]{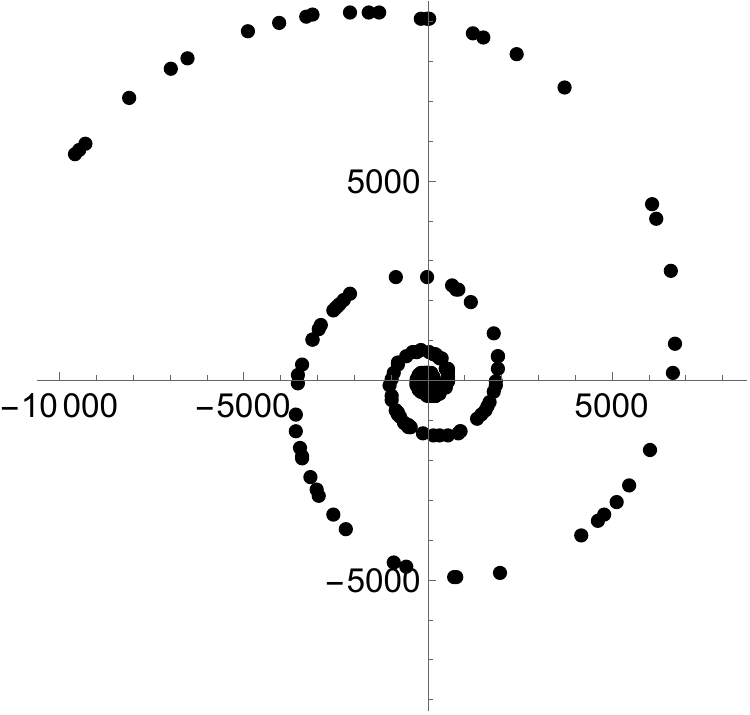}
    \captionsetup{width=.8\linewidth}
    \caption{Points on a spiral.}
    \end{minipage}%
    \begin{minipage}{0.49\textwidth}
    \centering
\includegraphics[width=0.7\textwidth]{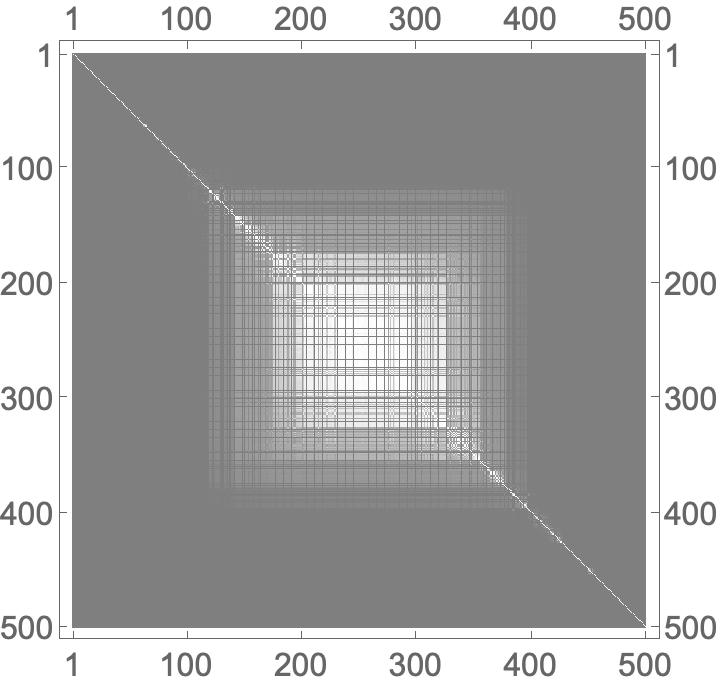}
    \captionsetup{width=.8\linewidth}
    \caption{The matrix $A$.}    \end{minipage}
\end{figure}

Choosing indices uniformly at random ($\beta = 0$) gives the best result.  Adaptive Random Pivoting ($\beta =1$) is almost as good, the method suggested by our Theorem ($\beta=2$) is virtually indistinguishable. The Greedy Method fails in a somewhat unfair way: initially, all entries on the diagonal are 1. In the implementation of the author, if the list of indices $i$ with $A_{ii}$ being maximal contains more than one element, the largest index is chosen. This means that the greedy method will erase the last row and column. This has no impact on the rest of the matrix and it will then, by induction, erase the penultimate row/column. It ends up selecting and erasing the 50 last rows/columns. A fairer implementation would be to take a random element among those that are maximal, something that is essentially done by setting, say, $\beta = 20$. $\beta = 20$ is worse than the other methods but not by much. 
  \begin{table}[h!]
  \begin{tabular}{ | l | c | c | c |}
    \hline
     Method & $\|\cdot\|_{\tiny \mbox{op}}$   &  $\|\cdot\|_{F}$ &  $\|\cdot\|_{1}$  \\ \hline
    Purely Random ($\beta = 0$) & 0.06  & 0.17 & 0.56   \\ \hline
   Adaptive Random ($\beta = 1$) & 0.09 & 0.20 & 0.57  \\ \hline
    Frobenius Random ($\beta = 2$) & 0.10  & 0.20  & 0.58\\ \hline
    Greedy ($\beta=\infty$) & 1  & 0.99  & 0.9\\ \hline
     Fixed Greedy ($\beta=20$) & 0.18  & 0.28 & 0.62 \\ \hline
        Alternating $\beta=0, \beta =\infty$ & 0.09    & 0.21 & 0.59 \\ \hline
  \end{tabular}
    \captionsetup{width=.8\linewidth}
    \caption{Ratio $\|M^{(50)}\|_X/\|M^{(0)}\|_X$ in various norms.}
\end{table}

We also note that, frequently, the performance appears to be ordered: when $\beta=1$ does better than $\beta=2$, then usually $\beta = 0$ will be better than both. Conversely, if $\beta = 2$ is better than $\beta=1$, then greedy matching $\beta = \infty$ is often better still. This naturally suggests the rule outlined in the subsequent section.

\subsection{Alternating Pivot Selection.}
There are essentially two cases: \textit{either} the size of $\|A_i\|_{\ell^2}$ is correlated with the size of $A_{ii}$, in that case we want to take greedy pivoting $\beta = \infty$ (or at the very least $\beta$ large), \textit{or} the size of $A_{ii}$ is misleading when it comes to the size of $\|A_i\|_{\ell^2}$: then it is safest to use random sampling $\beta=0$. Of course, in
practice, we do not know which of these two situations we face and, trying to avoid looking up more matrix entries, there is no way for us to find out. This suggests a natural pivoting strategy that hedges its bets.

\begin{quote}
\textbf{Alternating Pivoting Rule.} Alternate between greedy pivot selection ($\beta = \infty$) and a fully randomized pivot selection ($\beta = 0$).
\end{quote}

 We see in Table 1 that it does reasonable well in the example of points on a spiral: it is comparable to $\beta=1$ and only slightly worse than fully random selection $\beta=0$ (which, for that example, was optimal). To illustrate the strategy in another setting, we return to random matrices $A = Q^T D Q \in \mathbb{R}^{100 \times 100}$ with $Q$ being a random orthogonal matrix and the diagonal matrix being defined by $D_{ii} = f(i)$. This was the setting where greedy pivoting, $\beta = \infty$, was very good. We consider the cases, $f(i) = 1/i$ and $f(i) = i^2$ and try to recover a rank 20 approximation.  Alternating pivoting inherits the good rate from $\beta = \infty$ while also being protected against misleading diagonal entries by sampling randomly half the time.
 
\begin{figure}[h!]
    \centering
    \begin{minipage}{.49\textwidth}
  \begin{tabular}{ | l | c | c | c |}
    \hline
  & $\|\cdot\|_{\tiny \mbox{op}}$  &  $\|\cdot\|_{F}$ &  $\|\cdot\|_{1}$   \\ \hline
$\beta=0$ & 0.20 & 0.31& 0.49    \\ \hline
$\beta=1$ & 0.19 & 0.31 & 0.48  \\ \hline
$\beta=2$ & 0.18 & 0.30 & 0.48  \\    \hline
$\beta=\infty$ & 0.11 & 0.25  & 0.43 \\    \hline
 $\beta=0,\infty$ & 0.14 & 0.27 & 0.45  \\    \hline
  \end{tabular}
    \captionsetup{width=.8\linewidth}
    \caption{Size of $M^{(20)}$ relative to $M^{(0)} = A$ when $f(i) = 1/i$.}
    \end{minipage}%
    \begin{minipage}{0.49\textwidth}

  \begin{tabular}{ | l | c | c | c |}
    \hline
  & $\|\cdot\|_{\tiny \mbox{op}}$  &  $\|\cdot\|_{F}$ &  $\|\cdot\|_{1}$   \\ \hline
$\beta=0$ & 0.41 & 0.31 & 0.24   \\ \hline
$\beta=1$ & 0.37 & 0.26 & 0.21 \\ \hline
$\beta=2$ & 0.34& 0.24 & 0.20  \\    \hline
$\beta=\infty$ & 0.23  & 0.17& 0.16 \\    \hline
 $\beta=0,\infty$ & 0.29 & 0.22  & 0.20 \\    \hline
  \end{tabular}
    \captionsetup{width=.8\linewidth}
    \caption{Size of $M^{(20)}$ relative to $M^{(0)} = A$ when $f(i) = i^3$.}
    \end{minipage}
\end{figure}

 \subsection{Related results}
 This algorithm has a nontrivial history, we refer to \cite[Chapter 3]{chen} for a detailed explanation. Random pivoting for QR was considered by Frieze-Kannan-Vempala \cite{frieze} with follow-up results due to Deshpande-Vempala \cite{deshpande} and Deshpande-Rademacher-Vempala-Wang \cite{deshpande2}. Random pivoting for Cholesky does not seem to appear in the literature before being mentioned in a 2017 paper by Musco-Woodruff \cite{musco} and a 2020 paper of Poulson \cite{poulson}. These two references do not document numerical experiments and also do not investigate theoretical properties; it appears the first thorough analysis appears to be in a fairly recent paper by Chen-Epperly-Tropp-Webber \cite{chen}. Their paper also illustrates the importance of algorithms of this type when working with kernel methods in machine learning.

\section{Proofs}

\subsection{Some preparatory computations.} We start with an explicit computation that is independent of the probabilities that are chosen.
\begin{lemma}
Let $A \in \mathbb{R}^{n \times n}$ be symmetric with rows/columns $a_1, \dots, a_n$.  If we set 
$ B = A - A_i^{T} A_i/a_{ii}$ with likelihood proportional to $p_i$, then
$$ \mathbb{E} \|B\|_F^2 = \|A\|_F^2 + \sum_{i=1}^{n} p_i \left(   \frac{\|A_i\|^4}{A_{ii}^2} - 2\frac{(A^3)_{ii}}{A_{ii}}  \right).$$
\end{lemma}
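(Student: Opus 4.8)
The plan is to fix the pivot index $i$, expand $\|B\|_F^2$ using bilinearity of the Frobenius inner product $\langle X, Y\rangle_F = \tr(X^T Y)$, evaluate the resulting terms in closed form, and then average over $i$ against the weights $p_i$. Writing the rank-one correction as $C_i := A_i^T A_i / A_{ii}$, we have
\[
\|B\|_F^2 = \|A - C_i\|_F^2 = \|A\|_F^2 - 2 \langle A, C_i\rangle_F + \|C_i\|_F^2,
\]
so the whole computation reduces to identifying $\langle A, C_i\rangle_F$ and $\|C_i\|_F^2$.

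For the cross term, using symmetry of $A$ and the cyclic invariance of the trace,
\[
\langle A, A_i^T A_i\rangle_F = \tr(A\, A_i^T A_i) = \tr(A_i A\, A_i^T) = A_i A A_i^T = \sum_{k,\ell} A_{ik} A_{k\ell} A_{\ell i} = (A^3)_{ii},
\]
so $\langle A, C_i\rangle_F = (A^3)_{ii}/A_{ii}$. For the quadratic term, since $A_i A_i^T = \|A_i\|^2$ is a scalar it pulls out of the trace:
\[
\|A_i^T A_i\|_F^2 = \tr\!\big(A_i^T A_i A_i^T A_i\big) = (A_i A_i^T)\,\tr(A_i^T A_i) = \|A_i\|^4,
\]
and hence $\|C_i\|_F^2 = \|A_i\|^4/A_{ii}^2$. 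Substituting, for each fixed admissible $i$ (i.e. $A_{ii} \neq 0$),
\[
\|B\|_F^2 = \|A\|_F^2 - 2\frac{(A^3)_{ii}}{A_{ii}} + \frac{\|A_i\|^4}{A_{ii}^2}.
\]

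Finally I would take the expectation over the random choice of $i$, which has distribution $\mathbb{P}(i) = p_i$, and use $\sum_i p_i = 1$ to factor out the $\|A\|_F^2$ term; this yields exactly the claimed identity. There is no genuine obstacle here — the argument is pure linear algebra and, notably, uses only symmetry of $A$, not positive-definiteness, which is why the lemma can later be specialized both to the $\beta = 2$ sampling rule ($p_i \propto A_{ii}^2$) and to the deterministic greedy corollaries. The one mildly delicate point is recognizing the cross term as the diagonal entry $(A^3)_{ii}$ rather than leaving it as an opaque double sum, together with keeping the row-vector/outer-product conventions for $A_i$ and $A_i^T$ consistent throughout.
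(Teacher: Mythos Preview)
Your proof is correct. The core trace identities you use---$\tr(A\,A_i^T A_i)=(A^3)_{ii}$ and $\|A_i^T A_i\|_F^2=\|A_i\|^4$---are exactly the ones the paper establishes, and the averaging step is identical. The only methodological difference is in the setup: the paper invokes the spherical-average identity $\int_{\mathbb{S}^{n-1}}\|Xv\|^2\,dv=c_n\|X\|_F^2$, expands $\|Bv\|^2$ for a fixed vector $v$, and then integrates $v$ over the sphere to recover Frobenius norms; you instead expand $\|A-C_i\|_F^2$ directly via the bilinear form $\langle X,Y\rangle_F=\tr(X^TY)$. Your route is shorter and avoids the auxiliary constant $c_n$ and the integration device altogether, while the paper's detour through $\|Bv\|^2$ is perhaps more suggestive if one later wants to track the action on individual test vectors. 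Either way the two arguments converge on the same two trace computations, and both use only the symmetry of $A$.
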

\begin{proof}
We will work with the Frobenius norm in the form
$$ \int_{\mathbb{S}^{n-1}} \|Ax\|^2 dx =  \int_{\mathbb{S}^{n-1}} \left\langle Ax, Ax \right\rangle dx =  \int_{\mathbb{S}^{n-1}} \left\langle x, A^{T} Ax \right\rangle dx = c_n \|A\|_F^2,$$
where the last equation uses spherical symmetry to deduce that off-diagonal terms in $\left\langle x, A^{T} Ax \right\rangle$ average out to 0.  The constant $c_n$ depends only on the dimension and will not be important.
We start with a computation that is valid for any fixed vector $v \in \mathbb{R}^n$
\begin{align*}
\mathbb{E} \|Bv\|^2 &=  \mathbb{E} \left\| \left(  A - \frac{A_i^{T} A_i}{A_{ii}}\right)v \right\|^2 = \sum_{i=1}^{n} p_i  \left\|A v - \frac{A_i^{T} A_i}{A_{ii}}v \right\|^2 \\
&=  \|A v\|^2 -  2 \sum_{i=1}^{n} p_i \left\langle A v, \frac{A_i^{T} A_i}{A_{ii}}v \right\rangle + \sum_{i=1}^{n} p_i\left\| \frac{A_i^{T} A_i}{A_{ii}}v \right\|^2 \\
&= \|A v\|^2 + \sum_{i=1}^{n} p_i \left( \left\| \frac{A_i^{T} A_i}{A_{ii}}v \right\|^2 - 2 \left\langle A v, \frac{A_i^{T} A_i}{A_{ii}}v \right\rangle \right).
\end{align*}
Integrating $v$ on both sides of the equation over the sphere $\mathbb{S}^{n-1}$, 
$$ c_n\|B\|_F^2 = c_n\|A\|_F^2 + \sum_{i=1}^{n} p_i \cdot  \left( c_n\left\| \frac{A_i^{T} A_i}{A_{ii}} \right\|_F^2 - 2 \int_{\mathbb{S}^{n-1}} \left\langle A v, \frac{A_i^{T} A_i}{A_{ii}}v \right\rangle dv \right).$$
A computation shows
$$  \left\| \frac{A_i^{T} A_i}{A_{ii}} \right\|_F^2 = \frac{ \left\| A_i^{T} A_i \right\|_F^2}{A_{ii}^2} = \frac{\|A_i\|^4}{A_{ii}^2}$$
Using the symmetry of $A$, we have
$$2 \int_{\mathbb{S}^{n-1}} \left\langle A v, \frac{A_i^{T} A_i}{A_{ii}}v \right\rangle dv = 
2 \int_{\mathbb{S}^{n-1}} \left\langle v, A^T \frac{A_i^{T} A_i}{A_{ii}}v \right\rangle dv = 2c_n \tr\left( A^T \frac{A_i^{T} A_i}{A_{ii}} \right).$$
It remains to understand the trace of the matrix. One has
\begin{align*}
\left(A \frac{A_i^{T} A_i}{A_{ii}}\right)_{jj} &= \frac{1}{A_{ii}} \sum_{\ell=1}^{n} A_{j \ell} \left( A_i^{T} A_i \right)_{\ell j} = \frac{1}{A_{ii}} \sum_{\ell=1}^{n} A_{j \ell}  A_{i \ell} A_{i j} 
\end{align*}
Summing over $j$, we get
\begin{align*}
 \sum_{j=1}^n \left(A \frac{A_i^{T} A_i}{a_{ii}}\right)_{jj}  &=   \frac{1}{A_{ii}} \sum_{\ell=1}^{n}A_{ i \ell}  \sum_{j=1}^{n}A_{ij} A_{j \ell} = \frac{1}{A_{ii}} \sum_{\ell=1}^{n}A_{\ell i}  (A^2)_{i \ell} \\
&= \frac{1}{A_{ii}} \sum_{\ell=1}^{n}A_{\ell i}  (A^2)_{i \ell} = \frac{(A^3)_{ii}}{A_{ii}}.
\end{align*}
Altogether, we arrive at
$$ \|B\|_F^2 = \|A\|_F^2 + \sum_{i=1}^{n} p_i   \left(  \frac{\|A_i\|^4}{A_{ii}^2} - 2\frac{(A^3)_{ii}}{A_{ii}} \right).$$
\end{proof}

\subsection{An Inequality.} The previous Lemma leads to a curious quantity. One is naturally inclined to believe that this quantity should be negative since, otherwise, this would be indicative of the possibility of `bad' choices that can increase the Frobenius norm; this is indeed the case.
\begin{lemma}
If $A \in \mathbb{R}^{n \times n}$ is symmetric, positive semi-definite, then, for $1 \leq i \leq n$, 
$$ A_{ii} \cdot (A^3)_{ii} \geq \|A_i\|_{\ell^2}^4.$$
\end{lemma}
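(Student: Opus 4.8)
The plan is to recognize the claimed inequality as nothing more than the Cauchy--Schwarz inequality for the positive semi-definite bilinear form $\langle x,y\rangle_A := x^{T}Ay$. First I would introduce the vector $v := A e_i \in \mathbb{R}^n$, which is exactly the $i$-th column and (by symmetry of $A$) the $i$-th row of $A$, so that in particular $\|A_i\|_{\ell^2}^2 = \|v\|^2 = e_i^T A^2 e_i = (A^2)_{ii}$. Next I would rewrite each of the three quantities in the statement in terms of $\langle\cdot,\cdot\rangle_A$: trivially $A_{ii} = \langle e_i, e_i\rangle_A$; next $(A^3)_{ii} = e_i^T A^3 e_i = (Ae_i)^T A (Ae_i) = \langle v, v\rangle_A$; and, the key identity, $\|A_i\|_{\ell^2}^2 = (A^2)_{ii} = e_i^T A (A e_i) = \langle e_i, v\rangle_A$.

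With these identifications the desired bound $A_{ii}\cdot(A^3)_{ii} \geq \|A_i\|_{\ell^2}^4$ becomes precisely
$$ \langle e_i, e_i\rangle_A \, \langle v, v\rangle_A \;\geq\; \langle e_i, v\rangle_A^2, $$
which is Cauchy--Schwarz. The one point that needs a word of care is that $A$ is merely positive \emph{semi}-definite, so the form $\langle\cdot,\cdot\rangle_A$ may be degenerate; but Cauchy--Schwarz is valid for any positive semi-definite symmetric bilinear form, as one sees from the usual discriminant argument applied to the nonnegative quadratic $t \mapsto \langle e_i + t v, e_i + t v\rangle_A \geq 0$. Thus no positivity of $A$ beyond positive semi-definiteness is used.

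For a completely self-contained alternative I would instead diagonalize $A = \sum_{k} \lambda_k u_k u_k^{T}$ with $\lambda_k \geq 0$ and orthonormal $u_k$, and set $c_k := \langle e_i, u_k\rangle$. Then $A_{ii} = \sum_k \lambda_k c_k^2$, $\ \|A_i\|_{\ell^2}^2 = (A^2)_{ii} = \sum_k \lambda_k^2 c_k^2$, and $(A^3)_{ii} = \sum_k \lambda_k^3 c_k^2$, so the inequality is exactly Cauchy--Schwarz applied to the two vectors with entries $\lambda_k^{1/2} c_k$ and $\lambda_k^{3/2} c_k$ (the square roots make sense precisely because $\lambda_k \geq 0$). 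This form also makes the equality case transparent: equality holds iff $(\lambda_k^{1/2} c_k)_k$ and $(\lambda_k^{3/2} c_k)_k$ are proportional, i.e. essentially when $e_i$ is supported on a single eigenvalue of $A$, which is consistent with the earlier remark that diagonal matrices are the extremal case.

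I do not anticipate a genuine obstacle: the entire content of the lemma is choosing the right inner product, and the only subtlety is invoking the positive semi-definite (rather than strictly positive definite) version of Cauchy--Schwarz.
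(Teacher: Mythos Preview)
Your proof is correct. Your second, spectral-decomposition argument is exactly the paper's proof (with $c_k$ playing the role of $Q_{ki}$ and $\lambda_k$ that of $d_{kk}$); your first argument via the semi-inner product $\langle x,y\rangle_A = x^{T}Ay$ is a slightly slicker, coordinate-free packaging of the same Cauchy--Schwarz step that avoids diagonalizing $A$ altogether.
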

\begin{proof}
We have $ A = Q^T D Q$
   with $Q$ orthogonal and $D$ diagonal and $D_{ii} \geq 0$. Then 
   $$ (DQ)_{ij} = \sum_{k=1}^n d_{ik} Q_{kj} = d_{ii} Q_{ij}.$$
  Therefore, we can write an arbitrary entry of $A$ as
   $$ A_{ij} = \sum_{k=1}^n (Q^T)_{ik} (DQ)_{kj} =  \sum_{k=1}^n (Q^T)_{ik} d_{kk}Q_{kj} =  \sum_{k=1}^n d_{kk} Q_{ki} Q_{kj}$$
   In particular, the diagonal is positive since 
   $$ A_{ii} =  \sum_{k=1}^n d_{kk} Q_{ki}^2 \geq 0.$$
 Using the same computation together with $A^k = Q^T D^k Q$, one sees
 $$  (A^2)_{ii} = (Q^T D^2 Q)_{ii} = \sum_{k=1}^n d_{kk}^2 Q_{ki}^2$$
 as well as
$$  (A^3)_{ii} = (Q^T D^3 Q)_{ii} = \sum_{k=1}^n d_{kk}^3 Q_{ki}^2$$
Rewriting the norm of a row of $A$ as a diagonal entry of $A^2$
\begin{align*}
\|A_i\|_{\ell^2}^4 = \left( \sum_{k=1}^n A_{ik}^2\right)^2 =\left( (A^2)_{ii} \right)^2= \left( \sum_{k=1}^n d_{kk}^2 Q_{ki}^2 \right)^2.
 \end{align*}
The statement then follows from the Cauchy-Schwarz inequality
 \begin{align*}
 \|A_i\|_{\ell^2}^4 &= \left( \sum_{k=1}^n d_{kk}^2 Q_{ki}^2 \right)^2 = \left( \sum_{k=1}^n d_{kk}^{1/2} |Q_{ki}| \cdot d_{kk}^{3/2} |Q_{ki}| \right)^2 \\
 &\leq  \left(\sum_{k} d_{kk} Q_{ki}^2\right) \left( \sum_{k} d_{kk}^3 Q_{ki}^2\right) = A_{ii} \cdot (A^3)_{ii}.
 \end{align*}
   \end{proof}

\subsection{Proof of the Theorem.}
\begin{proof} The proof of Theorem 1 is now immediate. We choose the $i-$th row with probability
proportional to $A_{ii}^2$. Using Lemma 1 and Lemma 2
\begin{align*}
\mathbb{E} \|B\|_F^2 &= \|A\|_F^2 + \sum_{i=1}^{n} \frac{A_{ii}^2}{\sum_{\ell=1}^{n} A_{\ell \ell}^2} \left(   \frac{\|A_i\|^4}{A_{ii}^2} - 2\frac{(A^3)_{ii}}{A_{ii}}  \right) \\
&= \|A\|_F^2 +  \frac{1}{\sum_{\ell=1}^{n} A_{\ell \ell}^2} \sum_{i=1}^{n}\left(   \|A_i\|^4 -  2A_{ii} \cdot (A^3)_{ii} \right)\\
&\leq \|A\|_F^2 -  \frac{1}{\sum_{\ell=1}^{n} A_{\ell \ell}^2} \sum_{i=1}^{n}   A_{ii} \cdot (A^3)_{ii} \\
&\leq \|A\|_F^2 -  \frac{1}{\sum_{\ell=1}^{n} A_{\ell \ell}^2} \sum_{i=1}^{n}   \|A_i\|_{\ell^2}^4.
\end{align*}
The second part of the inequality follows from
$$ \|A\|_F^4 = \left( \sum_{i=1}^{n} \|A_i\|_{\ell^2}^2 \right)^2 \leq  n \sum_{i=1}^{n} \|A_i\|_{\ell^2}^4$$
together with $\|A_i\|_{\ell^2}^2 \geq A_{ii}^2$.
\end{proof}

\textbf{Remark.} The proof shows the stronger intermediate result
$$ \mathbb{E} \|B\|_F^2 \leq \|A\|_F^2 -  \frac{1}{\sum_{\ell=1}^{n} A_{\ell \ell}^2} \sum_{i=1}^{n}   A_{ii} \cdot (A^3)_{ii}.
$$
 Note that $A^3$ is also spd and has non-negative entries on the diagonal. Moreover,
 $$ \sum_{i=1}^{n} A_{ii} = \sum_{i=1}^{n} \lambda_i(A) \qquad \mbox{as well as} \qquad  \sum_{i=1}^{n} (A^3)_{ii} = \sum_{i=1}^{n} \lambda_i(A)^3.$$
 This leads us to the classical paradigm already hinted at above: finding a good low-rank approximation is a problem that is only relevant in the presence of eigenvalues at different scales. If all the eigenvalues are somewhat comparable, then most rank$-k$ approximations are going to be equally good. However, in the presence of eigenvalues at different scales, the quantity $\tr(A^3)$ is going to undergo quite a bit of growth when compared with the Frobenius norm; this leads to a nonlinear feedback loop that leads to a dramatic shrinking of the Frobenius and thus the revealing of good low rank approximations.
   
\subsection{Proof of Corollary 1.}
   \begin{proof}
   Using Lemma 1 with $p_i = 1$ and $p_j = 0$ for $j \neq i$, we have 
   $$ \|B\|_F^2 = \|A\|_F^2 +  \left(   \frac{\|A_i\|^4}{A_{ii}^2} - 2\frac{(A^3)_{ii}}{A_{ii}}  \right).$$
The Corollary follows from applying Lemma 2.
   \end{proof}

\subsection{Proof of Corollary 2.}
\begin{proof} 
The result follows at once from the inequality
$$\max_{1 \leq j \leq n \atop A_{jj} \neq 0}  \frac{\|A_j\|_{\ell^2}^4}{A_{jj}^2} \geq \frac{1}{n} \|A\|_F^2.$$
We first observe that, since $A$ is spd, we have
$$ |A_{ij}| \leq |A_{ii}|^{1/2} \cdot |A_{jj}|^{1/2}.$$
In particular, if a diagonal entry vanishes, $A_{ii} = 0$, then the entire row/column vanishes as well. Therefore, one can write
\begin{align*}
 \|A\|_F^4 &= \left( \sum_{i=1}^{n} \|A_i\|_{\ell^2}^2 \right)^2 =   \left( \sum_{i=1 \atop A_{ii} \neq 0}^{n} \|A_i\|_{\ell^2}^2 \right)^2 = \left( \sum_{i=1  \atop A_{ii} \neq 0 }^{n} \frac{\|A_i\|_{\ell^2}^2}{A_{ii}} A_{ii} \right)^2 \\
 &\leq \left( \sum_{i=1  \atop A_{ii} \neq 0}^{n} \frac{\|A_i\|_{\ell^2}^4}{A_{ii}^2} \right) \left(\sum_{i=1}^{n} A_{ii}^2 \right) \leq \|A\|_F^2  \sum_{i=1  \atop A_{ii} \neq 0}^{n} \frac{\|A_i\|_{\ell^2}^4}{A_{ii}^2}\\
  &\leq \|A\|_F^2 \cdot  n \cdot \max_{1 \leq i \leq n  \atop A_{ii} \neq 0}^{} \frac{\|A_i\|_{\ell^2}^4}{A_{ii}^2}.
\end{align*}
\end{proof}

\end{document}